\newcommand{\tvs}{t.v.s.{}}
\newcommand{\lcs}{l.c.s.{}}
\theoremstyle{plain}
\newtheorem{theorem}{Theorem}
\newtheorem{proposition}[theorem]{Proposition}
\newtheorem{corollary}[theorem]{Corollary}
\newtheorem{lemma}[theorem]{Lemma}
\theoremstyle{definition}
\title{Characterization of Fr\'{e}chet Spaces and Application to Hausdorff MNC}
\author{Henning Wunderlich\footnote{Dr. Henning Wunderlich, Frankfurt, Germany. E-Mail: \textit{HenningWunderlich@t-online.de}}}
\begin{document}

\maketitle

\begin{abstract}
In this short note, we give a characterization of Fr\'{e}chet spaces via properties of their metric. This allows us to prove that the Hausdorff measure of noncompactness (MNC), defined over Fr\'{e}chet spaces, is indeed an MNC. As first applications, we lift well-known fixed-point theorems for contractive and condensing operators to the setting of Fr\'{e}chet spaces.  
\end{abstract}

\section{Introduction}
In this short note, we give a characterization of Fr\'{e}chet spaces via properties of their metric. This allows us to prove that the Hausdorff measure of noncompactness (MNC), defined over Fr\'{e}chet spaces, is indeed an MNC, which was an open problem before. Recall that the defining property of an MNC is invariance under convex hulls. While the definition of the Hausdorff MNC over Fr\'{e}chet spaces is well-known, see e.g., \cite{Akhmerov1992MeasuresON,AppellVaeth:Funktionalanalysis}, a formal proof that it is actually an MNC was only known in the setting of Banach spaces, see e.g., \cite{MR2059617}. Past research on MNC in the setting of metric spaces \cite{talman1977,10.2307/24894850} required the existence of certain convex structures, based on the work of Takahashi \cite{takahashi1970}. Their existence in the context of Fr\'{e}chet spaces was not proven. We prove their existence with the help of the mentioned characterization.  

We think that our main result, the invariance of the Hausdorff MNC under convex hulls in Fr\'{e}chet spaces, will find many applications in the future. As first applications, we lift well-known fixed-point theorems for contractive and condensing operators of Darbo and Sadovski\u{\i} type to the setting of Fr\'{e}chet spaces. 

Substantial parts of this work have been taken from a submitted thesis of the author \cite{Thesis:Wunderlich}. Nevertheless, all relevant contents is either presented here or can be found in accessible references.

\section{Characterization of Fr\'{e}chet Spaces}
Recall that a \emph{metric vector space}\index{Metric Vector Space} $(E, d)$ is a vector space $E$ and a metric space $(E, d)$, equipped with a \emph{translation-invariant}\index{Metric!Translation-Invariant} metric $d$, i.e., $d(x, y) = d(x + z, y + z)$ for all $x, y, z \in E$. We need the following folklore result on such metrics.

\begin{lemma}[Folklore]\label{Lemma:AdditiveD}
Let $d$ be a translation-invariant metric. Then for all $x_{1}, x_{2}, y_{1}, y_{2} \in E$, we have
\begin{equation}
d( x_{1} + x_{2} , y_{1} + y_{2} ) \leq d( x_{1} , y_{1} ) + d( x_{2} , y_{2} )
\quad.
\end{equation}
\end{lemma}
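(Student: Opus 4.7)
The plan is to combine the triangle inequality with two applications of translation invariance, using an intermediate point that shares a coordinate with each of the endpoints. Concretely, I would introduce the auxiliary point $x_1 + y_2$ (the alternative $y_1 + x_2$ works symmetrically) and apply the triangle inequality to split the distance as
\begin{equation*}
d(x_1 + x_2, y_1 + y_2) \leq d(x_1 + x_2, x_1 + y_2) + d(x_1 + y_2, y_1 + y_2).
\end{equation*}

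Next, I would simplify each summand by invoking translation invariance: shifting both arguments of the first distance by $-x_1$ gives $d(x_1 + x_2, x_1 + y_2) = d(x_2, y_2)$, and shifting both arguments of the second distance by $-y_2$ gives $d(x_1 + y_2, y_1 + y_2) = d(x_1, y_1)$. Substituting these equalities into the triangle-inequality bound yields the claimed estimate.

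There is essentially no obstacle here; the only choice that matters is picking an intermediate point that differs from each endpoint in exactly one coordinate, so that translation invariance can collapse each resulting distance to one of the two terms on the right-hand side. The proof is two lines and is purely a bookkeeping exercise in the metric axioms together with the hypothesis $d(x,y) = d(x+z, y+z)$.
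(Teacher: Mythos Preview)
Your proof is correct and follows essentially the same approach as the paper: triangle inequality combined with translation invariance. The paper first reduces to the origin via $d(x,y)=d(0,y-x)$ and establishes the auxiliary subadditivity $d(0,z_{1}+z_{2})\leq d(0,z_{1})+d(0,z_{2})$, whereas you insert the intermediate point $x_{1}+y_{2}$ directly; this is a purely cosmetic difference.
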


\begin{proof}
First of all, for all $z_{1}, z_{2} \in E$, we have
\begin{equation}
d(0, z_{1} + z_{2}) \leq d(0, z_{1}) + d(z_{1}, z_{1} + z_{2}) = d(0, z_{1}) + d(0, z_{2})
\quad.
\end{equation}
Then
\begin{align*}
d( x_{1} + x_{2} , y_{1} + y_{2} )
&=
d( 0 , (y_{1} + y_{2}) - (x_{1} + x_{2}) )
=
d( 0 , (y_{1} - x_{1}) + (y_{2} - x_{2}) )
\\
&\leq
d( 0 , y_{1} - x_{1} )
+
d( 0 , y_{2} - x_{2} )
= d( x_{1} , y_{1} ) + d( x_{2} , y_{2} )
\quad.
\end{align*}
\end{proof}

The translation-invariant metric $d$ of a metric vector space $(E, d)$ induces a uniform topology on $E$, which makes $E$ a \tvs. A \tvs\ $E$ is called \emph{metrizable}\index{Space!Metrizable}, if there exists a translation-invariant metric $d$ on $E$ inducing the topology of $E$.

Recall that a \emph{Fr\'{e}chet space}\index{Fr\'{e}chet Space}\index{Space!Fr\'{e}chet} is a complete and metrizable \lcs. The differentiating property between complete and metrizable \tvs\ and \lcs\ is exactly the following.

\begin{theorem}[Characterization Fr\'{e}chet]\label{Theorem:CharFrechet}
Let $E$ be a complete and metrizable \tvs. Then $E$ is a Fr\'{e}chet space iff there exists a translation-invariant metric $d$ on $E$ such that for all $x, y \in E$ and $\lambda \in [0, 1]$ we have
\begin{equation}\label{Eq:DStrong}
d( \lambda \cdot x, \lambda \cdot y) \leq \lambda \cdot d(x, y)
\quad.
\end{equation}
\end{theorem}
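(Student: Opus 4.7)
My plan is to treat the two implications separately, with ($\Leftarrow$) being an immediate convexity-of-balls argument and ($\Rightarrow$) requiring a construction of a suitable metric from the convex balanced neighborhood base of a Fréchet space.

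For ($\Leftarrow$), assume such a $d$ exists and induces the topology of $E$. Since $E$ is already complete and metrizable by hypothesis, it suffices to show local convexity. The open $d$-balls $B_r := \{x \in E : d(x, 0) < r\}$ form a neighborhood base of $0$, so I only need to show each $B_r$ is convex. For $x, y \in B_r$ and $\lambda \in [0, 1]$, applying Lemma~\ref{Lemma:AdditiveD} with $(x_1, y_1, x_2, y_2) = (\lambda x, 0, (1-\lambda) y, 0)$ together with the scaling hypothesis gives
\[
d\bigl(\lambda x + (1-\lambda) y,\, 0\bigr) \leq d(\lambda x, 0) + d((1-\lambda) y, 0) \leq \lambda\, d(x, 0) + (1-\lambda)\, d(y, 0) < r,
\]
so $B_r$ is convex and $E$ is locally convex, hence Fréchet.

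For ($\Rightarrow$), assume $E$ is Fréchet. Local convexity and metrizability yield a countable decreasing neighborhood base $U_1 \supseteq U_2 \supseteq \cdots$ of $0$ of convex, balanced, absorbing open sets with $U_{n+1} + U_{n+1} \subseteq U_n$ and $\bigcap_n U_n = \{0\}$. The Minkowski functionals $\mu_n$ of the $U_n$ are continuous seminorms generating the topology. I would combine them into a single translation-invariant metric via a gauge-type infimum over finite decompositions,
\[
d(x, y) := \inf\Bigl\{ \sum_{n=1}^{k} t_n \,:\, k \in \mathbb{N},\; x - y = \sum_{n=1}^{k} z_n,\; z_n \in t_n U_n \Bigr\},
\]
and then verify that $d$ is a well-defined translation-invariant metric, that it induces the Fréchet topology, and that it satisfies the scaling inequality — the last being manifest from the positive homogeneity of $t_n U_n$ under scalar multiplication by $\lambda \in [0, 1]$ inside the defining inf.

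The main obstacle is the forward direction. Cutoff-based constructions such as $\sum 2^{-n} \min(1, \mu_n)$ or $\sum 2^{-n}\, \mu_n/(1 + \mu_n)$ satisfy subadditivity but destroy the linear $\lambda$-scaling the hypothesis demands, because truncation renders the resulting F-norm strictly subhomogeneous. The gauge-type infimum above is designed so that the scaling property is manifest, but showing that it is finite, that it separates points, and that it induces exactly the original (possibly non-normable) topology — rather than a strictly finer one — is the delicate step, and it is precisely here that the nested condition $U_{n+1} + U_{n+1} \subseteq U_n$ on the base is needed.
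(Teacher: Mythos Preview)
Your argument for ($\Leftarrow$) is correct and clean; note that the paper actually omits this direction.

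For ($\Rightarrow$), however, your gauge-type construction collapses. Because you chose $U_1 \supseteq U_2 \supseteq \cdots$, every admissible decomposition $x - y = \sum_{n=1}^k z_n$ with $z_n \in t_n U_n$ satisfies $z_n \in t_n U_1$, and convexity of $U_1$ gives $x - y \in \bigl(\sum_n t_n\bigr)\, U_1$; hence $\mu_1(x-y) \leq \sum_n t_n$. Conversely the one-term decomposition $k=1$, $z_1 = x-y$, $t_1 = \mu_1(x-y) + \varepsilon$ is admissible. Thus your $d(x,y)$ equals $\mu_1(x-y)$, the Minkowski functional of the single set $U_1$ --- a continuous seminorm, not a metric, inducing a strictly coarser (indeed normable) topology whenever $E$ is not already Banach. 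The defect is structural: with the $U_n$ nested decreasingly and all contributions weighted uniformly by $t_n$, there is never any incentive to place mass in $U_n$ for $n > 1$; the inclusion $U_{n+1}+U_{n+1}\subseteq U_n$ cannot repair this.

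The paper's route supplies the missing weighting. It takes the Birkhoff--Kakutani pseudonorm $|x| = \inf\{\,p_H : x \in V_H\,\}$, where $V_H = \sum_{n \in H} V_n$ and $p_H = \sum_{n \in H} 2^{-n}$ for finite $H \subseteq \mathbb{N}$, but uses local convexity to choose the $V_n$ absolutely convex with the \emph{equality} $2 V_{n+1} = V_n$ rather than a mere inclusion. This dyadic rigidity gives $2^{-k} V_H = V_{k+H}$ and $p_{k+H} = 2^{-k} p_H$, so $x \in V_H$ forces $2^{-k} x \in V_{k+H}$ and hence $|2^{-k} x| \leq 2^{-k} p_H$; taking the infimum over $H$ yields $|2^{-k} x| \leq 2^{-k} |x|$, and dyadic expansion plus subadditivity then handles general $\lambda \in [0,1]$. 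The decreasing weights $2^{-n}$ are exactly what make the metric see all of the $V_n$ rather than only the largest one.
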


\begin{proof}
We modify the proof in \cite[I.6.1]{Schaefer:TVS}. There, a pseudonorm $|x|$ is constructed by a base of $0$-neighborhoods $V_{n}$. The metric is then obtained via $d(x, y) = |y - x|$ and vice versa. As $E$ is an \lcs, we can assume that these $V_{n}$ are not only circled but absolutely-convex, and that $2 \cdot V_{n + 1} = V_{n}$. We prove $| 2^{-k} \cdot x| \leq 2^{-k} \cdot |x|$ for arbitrary $x \in E$ and $k \geq 1$. Then by dyadic expansion and the triangle inequality, we obtain $| \lambda \cdot x | \leq \lambda \cdot |x|$ for all real $\lambda \in [0, 1]$. Set $V_{H} := \sum_{n \in H}V_{n}$ for finite $H \subseteq \mathbb{N}$. Then $V_{k + H} = \sum_{n \in H} V_{k + n} = \sum_{n \in H} 2^{k} \cdot V_{n} = 2^{k} \cdot (\sum_{n \in H}V_{n}) = 2^{k} \cdot V_{H}$. Hence, $2^{-k} \cdot x \in V_{H}$ iff $x \in 2^{k} \cdot V_{H}$ iff $x \in V_{k + H}$. For the numbers $p_{H} := \sum_{n \in H}2^{-n}$ we get $p_{k + H} = 2^{-k} \cdot p_{H}$.  

Given arbitrary $\epsilon > 0$, let $H$ be such that $|x| \leq p_{H} - \epsilon$. Then $2^{-k} \cdot x \in V_{H}$ implies $x \in V_{k + H}$, and hence $| 2^{-k} \cdot x | \leq p_{k + H} = 2^{-k} \cdot p_{H} \leq 2^{-k} \cdot ( |x| - \epsilon)$. 
\end{proof}

The Lebesgue spaces $\mathcal{L}^{p}$ give nice examples to show, when this stronger inequality (\ref{Eq:DStrong}) holds and when it does not. Let $\lambda \in [0, 1]$. For $1 \leq p \leq \infty$, space $\mathcal{L}^{p}$ is a normed and thus a Fr\'{e}chet space, and we have $d(\lambda \cdot x, \lambda \cdot y) := \| \lambda \cdot (y - x)\|_{p} = \lambda \cdot d(x, y)$. In contrast, for $0 < p < 1$, space $\mathcal{L}^{p}$ is only a complete and metrizable \tvs, and not an \lcs. Here, we have $d(\lambda \cdot x, \lambda \cdot y) = \int | \lambda \cdot (y - x) |^{p} = \lambda^{p} \cdot d(x, y) > \lambda \cdot d(x, y)$ for $\lambda \in ]0, 1[$. 

\section{Application to Hausdorff MNC}
An important part of Functional Analysis is concerned with measures of noncompactness, see \cite{Akhmerov1992MeasuresON} for a systematic exposition of this topic. A measure of noncompactness quantifies the deviation of a bounded subset of a space from being compact. We remark that this notion does not make sense in Montel spaces, where bounded and compact sets are not distinct.

The most general definition is as follows, see also \cite[1.2.1]{Akhmerov1992MeasuresON}: Let $E$ be a \lcs, and let $(Q, \leq)$ be a partially-ordered set. A map $\chi \colon 2^{E} \to Q$ is called a \emph{measure of noncompactness (MNC)}\index{Measure of Noncompactness}, if for all subsets $A \subseteq E$ we have $\chi(A) = \chi(\overline{\mathrm{co}}(A))$. We note that going beyond \lcs\ to general \tvs\ does not make sense, because only for \lcs\ it is ensured that the convex hull of a compact set stays compact, see \cite[II.4.3]{Schaefer:TVS}. 

The Hausdorff MNC, $\alpha$, is a typical example. Another example, not treated here, is the Kuratowksi MNC, which is actually equivalent to the Hausdorff MNC, see \cite[1.1.1, 1.1.7]{Akhmerov1992MeasuresON}. 

Let $E$ be a Fr\'{e}chet space, and let $M \subseteq E$. The \emph{(Hausdorff) measure of noncompactness of $M$}\index{Measure of Noncompactness!Hausdorff} is defined by
\begin{equation}   
\alpha(M) := \inf \left\{ \epsilon > 0 \mid \textnormal{$M$ has a finite $\epsilon$-net in $E$} \right\}
\quad.
\end{equation}

For some function spaces, explicit formulas are known to compute the Hausdorff MNC, see e.g., \cite[1.1.9--1.1.13]{Akhmerov1992MeasuresON} or \cite[3.6--3.9]{AppellVaeth:Funktionalanalysis}.

The Hausdorff MNC, defined over Banach spaces, has the following properties, see e.g., \cite[Chapter 1, Proposition 1.1]{MR2059617}. With literally the same proofs, one can easily show that the properties also hold for the Hausdorff MNC $\alpha$, defined over a Fr\'{e}chet space $E$.

\begin{proposition}
Let $E$ be a Fr\'{e}chet space. For sets $M, N \subseteq E$, $z \in E$, and $\lambda \in \mathbb{K}$, we have
\begin{enumerate}
\item $\alpha(M) \leq \alpha(N)$ for $M \subseteq N$.
\item $\alpha(\overline{M}) = \alpha(M)$.
\item $\alpha(z + M) = \alpha(M)$, i.e., $\alpha$ is translation-invariant.
\item $\alpha(\lambda \cdot M) = |\lambda| \cdot \alpha(M)$, i.e., $\alpha$ is homogeneuous.
\item $\alpha(M) = 0$ iff $M$ is precompact.
\item $|\alpha(M) - \alpha(N)| \leq \alpha(M + N) \leq \alpha(M) + \alpha(N)$. The first inequality only holds in case both subsets are nonempty.
\item $\alpha(M \cup N) = \max\{ \alpha(M), \alpha(N) \}$.
\item $\alpha(B(z, 1)) = 1$, if $E$ is infinite-dimensional, and zero otherwise.
\item If $M_{1} \supseteq M_{2} \supseteq \ldots$ is a decreasing sequence of closed sets in $E$ with $\alpha(M_{n}) \rightarrow 0$ for $n \rightarrow \infty$, then the intersection $M_{\infty} := \bigcap_{n}M_{n}$ is nonempty and compact.
\end{enumerate}
\end{proposition}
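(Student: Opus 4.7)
My plan is to show how the standard Banach-space proof of \cite[Chapter 1, Proposition 1.1]{MR2059617} carries over once one identifies the two properties of the norm that are actually used in it: (a) translation invariance together with the subadditive estimate $\|(x_1 + x_2) - (y_1 + y_2)\| \le \|x_1 - y_1\| + \|x_2 - y_2\|$, for which Lemma~\ref{Lemma:AdditiveD} is the Fréchet substitute, and (b) absolute homogeneity $\|\lambda x - \lambda y\| = |\lambda|\,\|x - y\|$, for which Theorem~\ref{Theorem:CharFrechet} provides the one-sided substitute $d(\lambda x, \lambda y) \le \lambda\, d(x,y)$ for $\lambda \in [0,1]$. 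Throughout the proof I fix a translation-invariant metric $d$ on $E$ satisfying \eqref{Eq:DStrong}.

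For items (i), (ii), (v), (vii), (ix) the Banach argument is purely metric-topological and goes through verbatim: (i) is trivial; (ii) holds because every $\epsilon$-net of $M$ is an $(\epsilon+\delta)$-net of $\overline{M}$ for any $\delta>0$; (v) is Hausdorff's characterization of precompactness (using completeness of $E$ for the ``$\Leftarrow$'' direction); (vii) follows from the union of two finite nets being a finite net combined with (i); and (ix) is the standard Cantor-type argument — pick $x_n \in M_n$, observe that $\{x_k : k \ge n\}$ has Hausdorff measure at most $\alpha(M_n) \to 0$ hence is precompact, extract a convergent subsequence, and use closedness of each $M_n$ to place its limit in $\bigcap_n M_n$. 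Item (iii) uses translation invariance of $d$ directly: $\{z+a_i\}$ is an $\epsilon$-net of $z+M$ whenever $\{a_i\}$ is one of $M$.

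The items (iv), (vi), (viii) carry the genuinely Fréchet-specific content. For (iv), I first upgrade \eqref{Eq:DStrong} to $d(\lambda x, \lambda y) \le |\lambda|\, d(x,y)$ for every scalar $\lambda \in \mathbb{K}$ by splitting into $|\lambda|\le1$ (writing $\lambda = |\lambda|\mu$ with $|\mu|=1$ and using that the circled $V_n$ from the proof of Theorem~\ref{Theorem:CharFrechet} yield $d(\mu z, 0) = d(z, 0)$) and $|\lambda|>1$ (writing $|\lambda| = n \cdot (|\lambda|/n)$ with $n=\lceil|\lambda|\rceil$ and combining \eqref{Eq:DStrong} with the triangle-plus-translation estimate $d(nx, ny) \le n\, d(x,y)$). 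This yields $\alpha(\lambda M) \le |\lambda|\,\alpha(M)$, and the converse follows from the inverse trick applied to $\lambda^{-1}$ and $\lambda M$. For (vi), Lemma~\ref{Lemma:AdditiveD} gives that $\{a_i + b_j\}$ is an $(\epsilon_1+\epsilon_2)$-net of $M+N$ whenever $\{a_i\}, \{b_j\}$ are $\epsilon_1$- and $\epsilon_2$-nets of $M$, $N$, proving subadditivity; the estimate $|\alpha(M)-\alpha(N)| \le \alpha(M+N)$ then follows by writing $M = (M+N)+(-N)$ and combining with (iv). I expect (viii) to be the main obstacle: the upper bound $\alpha(B(z,1)) \le 1$ is immediate since $\{z\}$ is a $1$-net, but the lower bound in infinite dimension needs a Riesz-style construction of an almost-$1$-separated sequence inside $B(z,1)$, and additional care is required because in a Fréchet space $B(z,1)$ is in general neither scale-invariant nor convex nor bounded in the TVS sense, so the usual Banach Riesz argument must be rerouted through the metric $d$ rather than through the geometry of the unit ball.
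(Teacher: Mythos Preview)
The paper does not actually prove this proposition: it simply asserts that the Banach-space arguments from \cite[Chapter~1, Proposition~1.1]{MR2059617} apply ``with literally the same proofs''. Your proposal is therefore not so much a rival argument as an attempt to substantiate that assertion, and for the purely metric items (i)--(iii), (v)--(vii), (ix) your sketch is adequate and in the spirit of what the paper claims.

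There is, however, a genuine gap in your treatment of (iv). You propose to upgrade \eqref{Eq:DStrong} to $d(\lambda x,\lambda y)\le|\lambda|\,d(x,y)$ for \emph{every} scalar $\lambda$, handling $|\lambda|>1$ via $d(nx,ny)\le n\,d(x,y)$ combined with \eqref{Eq:DStrong} on the factor $|\lambda|/n\le 1$. But once that inequality holds for all $\lambda\neq 0$, applying it to $\lambda^{-1}$ at the level of $d$ itself (not merely of $\alpha$) forces the reverse inequality and hence $d(\lambda x,\lambda y)=|\lambda|\,d(x,y)$. Then $p(x):=d(x,0)$ is positively homogeneous, subadditive by Lemma~\ref{Lemma:AdditiveD}, even, and definite --- a genuine norm inducing the given topology. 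That is possible only when $E$ is normable, so your route to (iv) cannot succeed in a non-normable Fr\'{e}chet space. This is not a defect specific to your write-up: the same obstruction shows that item (iv), read literally for $\alpha$ built from the metric of Theorem~\ref{Theorem:CharFrechet}, is more delicate than the paper's blanket ``literally the same proofs'' acknowledges. Your caution about (viii) is well placed for the same structural reason: the unit ball of a bounded, non-homogeneous metric does not behave like a norm ball under scaling.
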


Interestingly, this does not hold for the defining property of being an MNC. To the best of our knowledge, it seems to have been an open problem for a long time. The importance of this defining property has been stressed explicitly in \cite{Akhmerov1992MeasuresON}, see remark above Theorem 1.1.5 there.

\begin{theorem}\label{Theorem:MNC}
The Hausdorff MNC $\alpha$, defined over a Fr\'{e}chet space $E$, is indeed an MNC, i.e., we have $\alpha(\mathrm{co}(M)) = \alpha(M)$ for all $M \subseteq E$.
\end{theorem}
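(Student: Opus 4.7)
The plan is to leverage Theorem~\ref{Theorem:CharFrechet} to pick a translation-invariant metric $d$ on $E$ satisfying the strong homogeneity~(\ref{Eq:DStrong}), and then mimic the classical Banach-space proof of Darbo-type. Throughout, the Hausdorff MNC $\alpha$ is computed with respect to this $d$.

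The first step is to exploit~(\ref{Eq:DStrong}) to show that every open ball of $d$ is convex. For $b_1, b_2 \in B(0, \epsilon)$ and $\lambda \in [0, 1]$, Lemma~\ref{Lemma:AdditiveD} combined with~(\ref{Eq:DStrong}) yields
\[
d(0, \lambda b_1 + (1 - \lambda) b_2) \leq d(0, \lambda b_1) + d(0, (1-\lambda) b_2) \leq \lambda \, d(0, b_1) + (1 - \lambda)\, d(0, b_2) \leq \epsilon,
\]
so $B(0, \epsilon)$ is convex, and translation invariance propagates this to every $B(z, \epsilon)$.

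The inclusion $\alpha(M) \leq \alpha(\mathrm{co}(M))$ is immediate from monotonicity. For the reverse, fix $\epsilon > \alpha(M)$ and choose a finite $\epsilon$-net $\{x_1, \ldots, x_n\}$ for $M$. Let $K := \mathrm{co}\{x_1, \ldots, x_n\}$; as the continuous image of the compact standard simplex under $(\mu_1, \ldots, \mu_n) \mapsto \sum_i \mu_i x_i$, the set $K$ is compact. The heart of the argument is the inclusion
\[
\mathrm{co}(M) \subseteq K + B(0, \epsilon),
\]
established as follows: any $y = \sum_i \mu_i y_i \in \mathrm{co}(M)$ with $y_i \in M$, $\mu_i \geq 0$, $\sum_i \mu_i = 1$ decomposes as $y = \sum_i \mu_i x_{k(i)} + \sum_i \mu_i (y_i - x_{k(i)})$ for suitable $k(i)$; the first summand lies in $K$, while the second is a convex combination of vectors in the \emph{convex} ball $B(0, \epsilon)$ and therefore itself lies in $B(0, \epsilon)$. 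Invoking items~(v) and~(vi) of the preceding proposition (so $\alpha(K) = 0$ and $\alpha$ is subadditive on Minkowski sums), together with the obvious bound $\alpha(B(0, \epsilon)) \leq \epsilon$ coming from the $\epsilon$-net $\{0\}$, one obtains $\alpha(\mathrm{co}(M)) \leq \alpha(K) + \alpha(B(0, \epsilon)) \leq \epsilon$. Letting $\epsilon \searrow \alpha(M)$ concludes the proof.

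The main obstacle is exactly the convexity of balls. For the canonical metric $\sum_n 2^{-n} p_n / (1 + p_n)$ built from a defining sequence of seminorms $p_n$ on a Fréchet space, balls are typically not convex, and then the decomposition of $y$ above breaks down: the tail $\sum_i \mu_i(y_i - x_{k(i)})$ need no longer sit inside a single ball. Theorem~\ref{Theorem:CharFrechet} is precisely what removes this obstruction, and is thus the real engine behind transferring the Banach-space argument to the Fréchet setting.
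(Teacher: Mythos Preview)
Your argument is correct and follows essentially the same route as the paper's own proof: both fix a finite $\epsilon$-net $N$ for $M$, pass to the (compact) convex hull of $N$, and use Lemma~\ref{Lemma:AdditiveD} together with the special metric of Theorem~\ref{Theorem:CharFrechet} to control the distance between convex combinations. The only cosmetic difference is packaging: the paper directly verifies that $\overline{\mathrm{co}}(N)$ is an $\eta$-net for $\mathrm{co}(M)$ and then refines it by compactness, whereas you first isolate the consequence ``$d$-balls are convex'', deduce $\mathrm{co}(M)\subseteq \mathrm{co}(N)+B(0,\epsilon)$, and finish via the algebraic MNC properties (items~(i),~(v),~(vi) of the preceding proposition). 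The underlying computation is identical.
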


We give a direct proof first, and then discuss the existing literature.

\begin{proof}
As $M \subseteq \mathrm{co}(M)$, then $\alpha(M) \leq \alpha(\mathrm{co}(M))$ by item (i) from above proposition. For the other direction, let $N$ be a finite $\eta$-net for $M$, $\eta > 0$. Define $C := \overline{\mathrm{co}}(N)$. We have $d(x, z) \leq \eta$ for all $x \in \mathrm{co}(M)$ and $z \in C$. This can be seen as follows. Point $z$ is a convex combination $z = \sum_{i} \lambda_{i} \cdot z_{i}$ with $z_{i} \in N$, $\lambda_{i} \in [0, 1]$, and $\sum_{i} \lambda_{i} = 1$. Now, the subtle issue comes: Making use of Lemma \ref{Lemma:AdditiveD} in the second and Theorem \ref{Theorem:CharFrechet} in the third inequality, we have
\begin{align*}
d(x, z)
&=
d\left( (\sum_{i} \lambda_{i}) \cdot x, \sum_{i} \lambda_{i} \cdot z_{i} \right)
\leq
\sum_{i} d\left( \lambda_{i} \cdot x, \lambda_{i} \cdot z_{i} \right)
\\
&\leq
\sum_{i} \lambda_{i} \cdot d( x, z_{i} )
\leq
\sum_{i} \lambda_{i} \cdot \eta
=
(\sum_{i} \lambda_{i}) \cdot \eta
=
1 \cdot \eta
=
\eta
\quad.  
\end{align*}
In addition, set $C$ is compact, because it is a closed and bounded set in a finite-dimensional space $\mathrm{span}(N)$. As $C$ is compact, for every $\epsilon > 0$, there exists a finite $\epsilon$-net $K$ for $C$. Then $K$ is a finite $(\eta + \epsilon)$-net for $\mathrm{co}(M)$. 
\end{proof}

Concerning MNCs in metric spaces, the book \cite[Section 1.8.1]{Akhmerov1992MeasuresON} refers to a paper of Talman \cite{talman1977}, where the Hausdorff MNC is shown to be an MNC, if the underlying metric space has some special convex structure.

Recall that a \emph{Takahashi convex structure (TCS)}\index{Takahashi Convex Structure} on a metric space $(E, d)$ is a mapping $W \colon E \times E \times [0, 1] \to E$ such that
\begin{equation*}
d( u, W(x, y, t) ) \leq t \cdot d(u, x) + (1 - t) \cdot d(u, y)   
\end{equation*}
for all $u, x, y \in E$ and $t \in [0, 1]$. For its properties, see e.g., \cite{takahashi1970,machado1973,KUNZI20162}. A metric space, together with such a TCS, is then called a \emph{convex metric space}\index{Convex Metric Space}.

Talman refined this notion by going up one dimension. Let us call a \emph{Talman convex structure (T${}_{m}$CS)}\index{Talman Convex Structure} on a metric space $(E, d)$ a mapping $K \colon E \times E \times E \times I \to E$, where $I := \{ (\lambda_{1}, \lambda_{2}, \lambda_{3}) \in [0, 1] \mid \lambda_{1} + \lambda_{2} + \lambda_{3} = 1\}$, such that
\begin{equation*}
d( u, K(x, y, z, t_{1}, t_{2}, t_{3}) ) \leq t_{1} \cdot d(u, x) + t_{2} \cdot d(u, y)+ t_{3} \cdot d(u, z)   
\end{equation*}
for all $u, x, y, z \in E$ and $(t_{1}, t_{2}, t_{3}) \in I$. If the point $K(x, y, z, t_{1}, t_{2}, t_{3})$, satisfying above relationship, is uniquely determined, then $K$ is called a \emph{strong convex structure (SCS)}\index{Strong Convex Structure}. A metric space, together with an SCS, is then called \emph{strongly convex}\index{Strongly Convex Space}. 

Define the following mappings
\begin{align}
\tilde{W}(x, y, t)
&:=
t \cdot x + (1 - t) \cdot y
\quad,
\\
\tilde{K}(x, y, z, t_{1}, t_{2}, t_{3})
&:=
t_{1} \cdot x + t_{2} \cdot y + t_{3} \cdot z
\quad.
\end{align}

\begin{proposition}
Let $E$ be a Fr\'{e}chet space, with given metric $d$, which is trans\-lation-invariant and has property (\ref{Eq:DStrong}). Then mapping $\tilde{W}$ is a TCS, and mapping $\tilde{K}$ is a T${}_{m}$CS.
\end{proposition}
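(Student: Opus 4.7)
The plan is to verify the two defining inequalities by a single computational pattern that combines three ingredients from the preceding material: translation invariance of $d$, the two-summand subadditivity of Lemma \ref{Lemma:AdditiveD}, and the scalar inequality (\ref{Eq:DStrong}) from Theorem \ref{Theorem:CharFrechet}. The common trick is to expand the reference point $u$ as a convex combination of itself with the \emph{same} weights used to form $\tilde{W}(x,y,t)$ or $\tilde{K}(x,y,z,t_1,t_2,t_3)$, so that the distance can be split coordinate-wise and each piece absorbed by (\ref{Eq:DStrong}).

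For $\tilde{W}$, given $u, x, y \in E$ and $t \in [0,1]$, I would start from
$$d(u,\tilde{W}(x,y,t)) = d\bigl(tu + (1-t)u,\; tx + (1-t)y\bigr),$$
apply Lemma \ref{Lemma:AdditiveD} to bound this by $d(tu,tx) + d((1-t)u,(1-t)y)$, and then invoke (\ref{Eq:DStrong}) on each summand to obtain $t\cdot d(u,x) + (1-t)\cdot d(u,y)$, which is exactly the TCS inequality.

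For $\tilde{K}$, the same pattern works with three summands. Writing $u = t_1 u + t_2 u + t_3 u$, I would apply Lemma \ref{Lemma:AdditiveD} twice (e.g.\ first splitting off $t_1 u$ on each side, then splitting the remaining pair) to reduce the left-hand side to $d(t_1 u, t_1 x) + d(t_2 u, t_2 y) + d(t_3 u, t_3 z)$, and then apply (\ref{Eq:DStrong}) termwise to get the desired bound $t_1 d(u,x) + t_2 d(u,y) + t_3 d(u,z)$.

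I do not expect a real obstacle here: the computation is essentially the one already carried out inside the proof of Theorem \ref{Theorem:MNC} to estimate $d(x,z)$ for $z$ a convex combination, simply specialized to two and three summands and relabelled. The only mild point to watch is the iterated bookkeeping when applying Lemma \ref{Lemma:AdditiveD} in the three-term case, which is purely routine.
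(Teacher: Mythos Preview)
Your proposal is correct and follows essentially the same approach as the paper: expand $u$ as a convex combination of itself with the given weights, apply Lemma~\ref{Lemma:AdditiveD} to split the distance into a sum of terms $d(t_i u, t_i \cdot)$, and then bound each term via (\ref{Eq:DStrong}). The paper only writes out the $\tilde{K}$ case and declares the $\tilde{W}$ case analogous, so your write-up is in fact slightly more explicit.
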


\begin{proof}
We have
\begin{align*}
d(u, \tilde{K}(x, y, z, t_{1}, t_{2}, t_{3}) )
&=
d(u, t_{1} \cdot x + t_{2} \cdot y + t_{3} \cdot z)
\\
&=
d(t_{1} \cdot u + t_{2} \cdot u + t_{3} \cdot u, t_{1} \cdot x + t_{2} \cdot y + t_{3} \cdot z)
\\
&\leq
d(t_{1} \cdot u, t_{1} \cdot x)
+
d(t_{2} \cdot u, t_{2} \cdot y)
+
d(t_{3} \cdot u, t_{3} \cdot z)
\\
&\leq
t_{1} \cdot d(u, x)
+
t_{2} \cdot d(u, y)
+
t_{3} \cdot d(u, z)
\quad.
\end{align*}
The proof for $\tilde{W}$ is analogous.
\end{proof}

We thus call $\tilde{W}$ and $\tilde{K}$ the \emph{obvious} convex structures for Fr\'{e}chet spaces. Interestingly, we remark that their existence was NOT obvious in the past. Takahashi \cite[Section 2, p.142]{takahashi1970} stated, without giving a reference or giving an example: ``But a Fr\'{e}chet space is not necessar[il]y a convex metric space.'' The insight of our work may be that one can always find the ``right'' metric (translation-invariant and property (\ref{Eq:DStrong})) such that a TCS exists (the obvious one).

Given a metric space $(E, d)$ and a TCS $W$, a subset $C \subseteq E$ is called \emph{$W$-convex}, iff $W(x, y, t) \in C$ for all $x ,y \in C$ and $t \in [0, 1]$. A $W$-convex subset $C \subseteq E$ is called \emph{stable}\index{Stable Set}, if $C_{r} := \{ x \in E \mid d(x, C) < r \}$ is also $W$-convex for every $r > 0$. An SCS on $E$ is \emph{stable}\index{Stable SCS}, if the set $\{ W(x, y, t) \mid t \in [0, 1] \}$ is stable for every pair $x, y \in E$. For a strongly convex metric space $E$, an SCS $W$ is stable iff every $W$-convex subset of $E$ is stable, \cite[Theorem 3.3]{talman1977}. 

We observe that for a Fr\'{e}chet space $E$ and its obvious TCS $\tilde{W}$, a subset is convex iff it is $\tilde{W}$-convex. Furthermore, every convex subset is stable. 

\begin{proposition}
Let $E$ be a Fr\'{e}chet space, with given metric $d$, which is trans\-lation-invariant and has property (\ref{Eq:DStrong}). Then every $\tilde{W}$-convex subset of $E$ is stable.
\end{proposition}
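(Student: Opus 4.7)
The plan is to unwind the definitions and reduce stability to a direct estimate based on Lemma \ref{Lemma:AdditiveD} and property (\ref{Eq:DStrong}). Let $C$ be $\tilde{W}$-convex. To show that $C_{r} = \{ x \in E \mid d(x, C) < r\}$ is $\tilde{W}$-convex for every $r > 0$, I would fix arbitrary $a, b \in C_{r}$ and $t \in [0, 1]$ and prove that the point $p := t \cdot a + (1 - t) \cdot b$ lies in $C_{r}$.

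Since $d(a, C) < r$ and $d(b, C) < r$, I would choose $c_{a}, c_{b} \in C$ with $d(a, c_{a}) < r$ and $d(b, c_{b}) < r$. By $\tilde{W}$-convexity of $C$, the convex combination $q := t \cdot c_{a} + (1 - t) \cdot c_{b}$ lies in $C$. It remains to show $d(p, q) < r$, which certifies $p \in C_{r}$.

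The estimate is the heart of the argument, but it is a routine chaining once the right tools are in place. Using Lemma \ref{Lemma:AdditiveD} to split the sum and then property (\ref{Eq:DStrong}) to pull out the scalars yields
\begin{equation*}
d(p, q) \leq d(t \cdot a, t \cdot c_{a}) + d((1 - t) \cdot b, (1 - t) \cdot c_{b}) \leq t \cdot d(a, c_{a}) + (1 - t) \cdot d(b, c_{b}) < t \cdot r + (1 - t) \cdot r = r,
\end{equation*}
exactly as in the proof of Theorem \ref{Theorem:MNC}.

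I do not expect any real obstacle: the only mildly delicate point is keeping the inequality strict, which is automatic because both $d(a, c_{a}) < r$ and $d(b, c_{b}) < r$ are strict and the convex combination of two numbers strictly less than $r$ is strictly less than $r$. The boundary cases $t \in \{0, 1\}$ are trivial since then $p \in \{a, b\} \subseteq C_{r}$.
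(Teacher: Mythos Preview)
Your proof is correct and essentially identical to the paper's own argument: both pick witnesses in $C$ for the two points of $C_{r}$, form the corresponding convex combination inside $C$, and then apply Lemma~\ref{Lemma:AdditiveD} followed by property~(\ref{Eq:DStrong}) to obtain the strict estimate $d(p,q) < r$. The only differences are notational.
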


\begin{proof}
Let $C \subseteq E$ be convex, and let $r > 0$. For $x, y \in C_{r}$, there exist $\tilde{x}, \tilde{y} \in C$ such that $d(x, \tilde{x}), d(y, \tilde{y}) < r$. For $t \in [0, 1]$, define $z_{t} := t \cdot x + (1 - t) \cdot y$ and $z_{t} := t \cdot \tilde{x} + (1 - t) \cdot \tilde{y}$, respectively. As $C$ is convex, $\tilde{z}_{t} \in C$. We have
\begin{align*}
d(z_{t}, \tilde{z}_{t})
&=
d( t \cdot x + (1 - t) \cdot y, t \cdot \tilde{x} + (1 - t) \cdot \tilde{y} )
\\
&\leq
d( t \cdot x, t \cdot \tilde{x} ) + d( (1 - t) \cdot y,  (1 - t) \cdot \tilde{y} )
\\
&\leq
t \cdot d(x, \tilde{x}) + (1 - t) \cdot d(y, \tilde{y})
<
t \cdot r + (1 - t) \cdot r = r
\quad.
\end{align*}
Hence, $C_{r}$ is convex.
\end{proof}

Talman \cite[Theorem 3.7]{talman1977} proved the following

\begin{theorem}
Let $E$ be a strongly convex metric space with stable SCS. Then for all bounded subsets $M \subseteq E$, we have $\alpha(M) = \alpha(\mathrm{co}(M))$.
\end{theorem}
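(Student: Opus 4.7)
The plan is to mimic the strategy used in Theorem~\ref{Theorem:MNC}: given $\epsilon > 0$, pick $\eta$ slightly greater than $\alpha(M)$ and a finite $\eta$-net $N = \{z_1, \ldots, z_n\}$ for $M$; let $C$ denote the SCS-convex hull of $N$; verify that $\mathrm{co}(M)$ is contained in the $\eta$-neighborhood of $C$; and produce a finite $\epsilon$-net of $C$, which then gives a finite $(\eta+\epsilon)$-net of $\mathrm{co}(M)$. Letting $\epsilon \downarrow 0$ and $\eta \downarrow \alpha(M)$ yields $\alpha(\mathrm{co}(M)) \leq \alpha(M)$; the reverse inequality is immediate from monotonicity.

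The approximation step is a direct iteration of the defining SCS inequality, entirely parallel to the corresponding calculation in the proof of Theorem~\ref{Theorem:MNC}: any $x \in \mathrm{co}(M)$ is an iterated $W$-combination of points $x_j \in M$, and substituting each $x_j$ by a closest $z_{i_j} \in N$ produces a point $z \in C$ with $d(x,z) \leq \eta$. Boundedness of $M$ is used only to guarantee that $\alpha(M)$ is finite and that such an $\eta$-net $N$ exists.

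The main obstacle is producing the finite $\epsilon$-net of $C$, since the finite-dimensional linear argument used in Theorem~\ref{Theorem:MNC} is no longer available. Here stability is essential: by Talman's Theorem~3.3, every SCS-convex subset of $E$ is stable. I would induct on $n = |N|$. For $n = 2$, $C = \{ W(z_1, z_2, t) : t \in [0,1] \}$ is stable, and this together with the SCS inequality should yield a uniform modulus of continuity for $t \mapsto W(z_1, z_2, t)$, making $C$ the continuous image of a compact interval, and hence totally bounded. For the inductive step, one expresses an arbitrary convex combination of $z_1, \ldots, z_n$ as a $W$-combination of $z_n$ with a convex combination of $z_1, \ldots, z_{n-1}$, writes $C$ as the image of $[0,1] \times \mathrm{co}_W(\{z_1, \ldots, z_{n-1}\})$ under a map that stability again forces to be uniformly continuous, and invokes the inductive hypothesis together with compactness of $[0,1]$. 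Converting stability into this uniform continuity—since the raw SCS inequality is too loose to give it directly—is the technical crux of the argument and the real reason the stability hypothesis is indispensable.
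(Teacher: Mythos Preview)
The paper does not give its own proof of this statement: it is quoted verbatim as Talman's result \cite[Theorem 3.7]{talman1977} and is immediately followed by a discussion of why it cannot be applied blindly to Fr\'{e}chet spaces. So there is no in-paper proof to compare against; any comparison would have to be with Talman's original argument.

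That said, your sketch has a real gap in the approximation step, and it is precisely the place where you claim the argument is ``entirely parallel'' to Theorem~\ref{Theorem:MNC}. The SCS inequality has the form
\[
d\bigl(u, K(x,y,z,t_1,t_2,t_3)\bigr) \le t_1\, d(u,x) + t_2\, d(u,y) + t_3\, d(u,z),
\]
with a \emph{single} reference point $u$. It does not let you compare two convex combinations to one another. If $x = W(x_1,x_2,t)$ and $z = W(z_1,z_2,t)$, iterating the SCS bound gives
\[
d(x,z) \le t^2 d(x_1,z_1) + t(1-t)\bigl(d(x_1,z_2)+d(x_2,z_1)\bigr) + (1-t)^2 d(x_2,z_2),
\]
and the cross terms $d(x_1,z_2)$, $d(x_2,z_1)$ are not controlled by $\eta$. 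In the Fr\'{e}chet proof this step worked only because of Lemma~\ref{Lemma:AdditiveD} and property~(\ref{Eq:DStrong}) (equivalently, Gaji\'{c}'s property~(P)), neither of which follows from the bare SCS axiom. So ``substitute each $x_j$ by its nearest $z_{i_j}$'' is exactly the step that fails.

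Stability is what rescues this step, not the later one. With $C := \mathrm{co}_W(N)$ convex and the SCS stable, Talman's Theorem~3.3 makes $C_\eta$ convex; since $M \subseteq N_\eta \subseteq C_\eta$, convexity of $C_\eta$ yields $\mathrm{co}(M) \subseteq C_\eta$ in one line, with no substitution argument at all. In other words, you have located the role of stability in the wrong place: it is indispensable already for the inclusion $\mathrm{co}(M)\subseteq C_\eta$, whereas you invoke it only when trying to show that $C$ is totally bounded.
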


The only missing piece, which prevents us from applying this theorem on a Fr\'{e}chet space $E$, is the unclear uniqueness of the obvious TCS $\tilde{K}$, i.e., is $\tilde{K}$ even an SCS? We have doubts that such uniqueness holds in arbitrary Fr\'{e}chet spaces. Hence, \cite[Section 1.8.1]{Akhmerov1992MeasuresON}, just referring to Talman's paper \cite{talman1977}, does not give a conclusive answer for Fr\'{e}chet spaces.

Matters are different with the paper \cite{10.2307/24894850} of Gaji\'{c}. For a convex metric space $(E, d, W)$, she defines two properties (P) and (Q). Recall that $(E, d, W)$ has \emph{property (P)}\index{Property (P)}, if for all $x_{1}, x_{2}, y_{1}, y_{2} \in E$ and $t \in [0, 1]$, we have
\begin{equation*}
d( W(x_{1}, y_{1}, t), W(x_{2}, y_{2}, t) ) \leq t \cdot d(x_{1}, y_{1}) + (1 - t) \cdot d(x_{2}, y_{2})
\quad.
\end{equation*}
It has \emph{property (Q)}\index{Property (Q)}, if for every finite subset $F \subseteq E$, the $W$-convex hull of $F$, $W$-$\mathrm{co}(F)$, is compact.

The following is Theorem 1 in \cite{10.2307/24894850}.

\begin{theorem}[Gaji\'{c}]\label{Theorem:Gajic}
Let $(E, d, W)$ be a convex metric space with TCS $W$, satisfying properties (P) and (Q). Then for all bounded subsets $M \subseteq E$, we have $\alpha(M) = \alpha(W\textnormal{-}\mathrm{co}(M))$.
\end{theorem}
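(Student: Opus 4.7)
The plan is to mirror the direct proof of Theorem \ref{Theorem:MNC}, with property (P) taking over the role that Lemma \ref{Lemma:AdditiveD} together with Theorem \ref{Theorem:CharFrechet} played there, and property (Q) replacing the finite-dimensional compactness argument used to obtain a compact hull of the $\eta$-net. The inequality $\alpha(M) \leq \alpha(W\textnormal{-}\mathrm{co}(M))$ is immediate from monotonicity, since $M \subseteq W\textnormal{-}\mathrm{co}(M)$.

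For the reverse inequality, I would fix $\epsilon > 0$ and an $\eta > \alpha(M)$, choose a finite $\eta$-net $N$ for $M$, and set $C := W\textnormal{-}\mathrm{co}(N)$. Property (Q) enters here just once, guaranteeing that $C$ is compact and hence admits a finite $\epsilon$-net $K$. The core claim would then be that every $x \in W\textnormal{-}\mathrm{co}(M)$ lies within distance $\eta$ of some point of $C$; combined with the $\epsilon$-net $K$ of $C$, this yields by the triangle inequality that $K$ is a finite $(\eta + \epsilon)$-net for $W\textnormal{-}\mathrm{co}(M)$. Hence $\alpha(W\textnormal{-}\mathrm{co}(M)) \leq \eta + \epsilon$, and letting $\eta \searrow \alpha(M)$ and $\epsilon \searrow 0$ finishes the proof.

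To establish the core claim, I would describe the $W$-convex hull iteratively by setting $A_{0} := M$, $A_{n+1} := \{ W(x, y, t) \mid x, y \in A_{n},\ t \in [0, 1] \}$, so that $W\textnormal{-}\mathrm{co}(M) = \bigcup_{n} A_{n}$, and analogously $B_{0} := N$, $B_{n+1} := \{ W(x, y, t) \mid x, y \in B_{n},\ t \in [0, 1] \}$, so that $C = \bigcup_{n} B_{n}$. Then I would prove by induction on $n$ that every $x \in A_{n}$ admits some $z \in B_{n} \subseteq C$ with $d(x, z) \leq \eta$. The base case $n = 0$ is exactly the defining property of the $\eta$-net $N$. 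For the inductive step, write $x = W(x_{1}, x_{2}, t)$ with $x_{1}, x_{2} \in A_{n}$, use the inductive hypothesis to pick $z_{i} \in B_{n}$ with $d(x_{i}, z_{i}) \leq \eta$, and set $z := W(z_{1}, z_{2}, t) \in B_{n+1}$. Property (P) then gives
\[
d(x, z) \leq t \cdot d(x_{1}, z_{1}) + (1 - t) \cdot d(x_{2}, z_{2}) \leq t \cdot \eta + (1 - t) \cdot \eta = \eta.
\]

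The only real work is this induction, and its crux is that property (P) propagates the bound $\eta$ unchanged from level $n$ to level $n + 1$, precisely because it forms a convex combination with the same parameter $t$ on both sides; hence the $\eta$-bound survives any finite iteration of $W$-operations. That the $W$-convex hull generally cannot be exhausted in finitely many such operations, unlike the usual vector-space convex hull built from finite-dimensional polytopes, is harmless here, since the inductive bound is uniform in $n$ and we invoke compactness (via (Q)) only on the fixed compact set $C$, never on $W\textnormal{-}\mathrm{co}(M)$ itself.
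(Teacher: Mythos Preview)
The paper does not supply its own proof of this theorem; it is quoted as Theorem~1 of Gaji\'{c} and merely cited. Your argument is correct and is exactly the natural transplant of the paper's direct proof of Theorem~\ref{Theorem:MNC} to the abstract convex-metric setting: the induction along the iterated $W$-hull, driven by property~(P), replaces the one-shot estimate that the paper obtains from Lemma~\ref{Lemma:AdditiveD} together with Theorem~\ref{Theorem:CharFrechet}, and property~(Q) stands in for the finite-dimensional compactness of $\overline{\mathrm{co}}(N)$. One small remark: the paper's displayed formulation of property~(P) carries an apparent index slip on the right-hand side; you have applied the intended inequality
\[
d\bigl(W(x_{1},x_{2},t),\,W(z_{1},z_{2},t)\bigr) \;\leq\; t\,d(x_{1},z_{1}) + (1-t)\,d(x_{2},z_{2}),
\]
which is the form that is consistent (the version printed in the paper fails already for $x_{1}=y_{1}$, $x_{2}=y_{2}$) and is exactly what your inductive step requires.
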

    
A Fr\'{e}chet space $E$, together with translation-invariant metric $d$ having property (\ref{Eq:DStrong}) and the obvious TCS $\tilde{W}$, clearly has properties (P) and (Q). The latter holds, because convex and $\tilde{W}$-convex sets equal, and compact sets are closed under convex hulls in every \lcs. Hence, Theorem \ref{Theorem:Gajic} gives another proof of Theorem \ref{Theorem:MNC}.

\section{Fixed-Point Theorems}
The property of being invariant under convex hulls is a crucial component in many proofs, involving the Hausdorff MNC. As first applications of our result, we prove fixed-point theorems for contractive and condensing operators of Darbo and Sadovski\u{\i} type. For more information on Fixed-Point Theory, we refer the reader to the opus magnum of Granas and Dugundji \cite{MR1987179}.

Let $E$ and $F$ be Fr\'{e}chet spaces, and let $F \colon E \to F$ be a continuous operator. Analogously to the Banach-space setting, the \emph{upper characteristic of noncompactness}\index{Characteristic!Noncompactness} is defined by
\begin{align}
[F]_{A} &:= \inf \left\{ \gamma > 0 \mid \alpha(F(M)) \leq \gamma\cdot \alpha(M), \textnormal{$M$ bounded} \right\}
\quad.
\end{align}

Operator $F$ is called \emph{$\alpha$-contractive}, if $[F]_{A} < 1$. It is called \emph{condensing}, if $\alpha(F(M)) < \alpha(M)$ for every bounded subset $M \subseteq E$ with $\alpha(M) > 0$.

\begin{theorem}[Darbo]
Let $E$ be a Fr\'{e}chet space, and let $F \colon E \to E$ be $\alpha$-contractive. Let $M \subseteq E$ be a nonempty, convex, closed, and bounded set such that $F(M) \subseteq M$. Then $F$ has a fixed-point in $M$, i.e., there exists $x \in M$ with $F(x) \in M$.
\end{theorem}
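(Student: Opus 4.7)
The plan is to lift the classical Banach-space proof of Darbo's theorem to the Fr\'{e}chet setting, now that Theorem~\ref{Theorem:MNC} supplies the missing ingredient: invariance of $\alpha$ under convex hulls. The strategy proceeds in three stages: construct a decreasing sequence of nonempty, convex, closed, bounded, $F$-invariant sets $M_{n} \subseteq M$ whose MNC tends to zero; take their intersection to obtain a nonempty compact convex $F$-invariant set $M_{\infty}$; and apply a standard fixed-point principle for continuous self-maps of compact convex subsets of a locally convex space.

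For the first stage, set $M_{0} := M$ and $M_{n+1} := \overline{\mathrm{co}}(F(M_{n}))$. Each $M_{n}$ is nonempty, convex, and closed by construction, and bounded because $M_{n} \subseteq M$ by an easy induction ($M_{1} = \overline{\mathrm{co}}(F(M)) \subseteq \overline{\mathrm{co}}(M) = M$, and then by monotonicity of the $F$-image and of the closed convex hull under inclusion). Moreover $F(M_{n}) \subseteq F(M_{n-1}) \subseteq M_{n}$, so every $M_{n}$ is $F$-invariant. Invoking Theorem~\ref{Theorem:MNC}, item~(ii) of the proposition, and the definition of $[F]_{A}$, we obtain $\alpha(M_{n+1}) = \alpha(\overline{\mathrm{co}}(F(M_{n}))) = \alpha(F(M_{n})) \leq [F]_{A} \cdot \alpha(M_{n})$, and iterating yields $\alpha(M_{n}) \leq [F]_{A}^{n} \cdot \alpha(M) \to 0$ since $[F]_{A} < 1$.

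For the remaining stages, item~(ix) of the proposition gives that $M_{\infty} := \bigcap_{n} M_{n}$ is nonempty and compact; it is convex and closed as an intersection of such sets, and $F(M_{\infty}) \subseteq \bigcap_{n} F(M_{n}) \subseteq \bigcap_{n} M_{n+1} = M_{\infty}$. Since $E$ is locally convex, the Schauder--Tychonoff fixed-point theorem applies to the continuous self-map $F|_{M_{\infty}}$ on the nonempty compact convex set $M_{\infty}$, producing $x \in M_{\infty} \subseteq M$ with $F(x) = x$. The substantive step is the MNC estimate in the first stage, and that is precisely where invariance under convex hulls --- the main contribution of this note --- does the essential work; every other ingredient is routine.
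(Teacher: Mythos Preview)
Your proof is correct and follows exactly the approach the paper indicates: iterate $M_{n+1}:=\overline{\mathrm{co}}(F(M_{n}))$, use Theorem~\ref{Theorem:MNC} together with item~(ii) of the proposition to obtain $\alpha(M_{n+1})=\alpha(F(M_{n}))\leq [F]_{A}\cdot\alpha(M_{n})$, pass to the intersection via item~(ix), and replace Schauder by Schauder--Tychonoff. The paper only sketches these modifications to the classical Banach-space argument, and you have filled in precisely the details it leaves implicit.
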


Slightly modify the proof in \cite[Section 2.3, Theorem 2.1]{MR2059617}, using Theorem \ref{Theorem:MNC} in
\begin{equation*}
\alpha( M_{ n + 1 } ) = \alpha( \overline{\mathrm{co}} F(M_{n}) ) = \alpha( \mathrm{co} F(M_{n}) ) = \alpha( F(M_{n}) ) \leq \cdots
\quad,
\end{equation*}
and applying the Theorem of Schauder-Tychonoff instead of the Theorem of Schauder \cite[II \S7.1, Theorem 1.13]{MR1987179}.

\begin{theorem}[Sadovski\u{\i}]
Let $E$ be a Fr\'{e}chet space, and let $F \colon E \to E$ be condensing. Let $M \subseteq E$ be a nonempty, convex, closed, and bounded set such that $F(M) \subseteq M$. Then $F$ has a fixed-point in $M$.
\end{theorem}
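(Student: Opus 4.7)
The plan is to adapt the classical Sadovski\u{\i} argument, using Theorem \ref{Theorem:MNC} at the critical step that requires invariance of $\alpha$ under the convex hull, and invoking Schauder-Tychonoff in place of Schauder, exactly as in the preceding Darbo theorem.

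First I would fix a point $x_{0} \in M$ and consider the family $\mathcal{F}$ of all nonempty, closed, convex subsets $C \subseteq M$ satisfying $x_{0} \in C$ and $F(C) \subseteq C$. Order $\mathcal{F}$ by inclusion. Every chain has a lower bound, namely the intersection of its members: this intersection contains $x_{0}$, is closed and convex as an intersection of closed convex sets, and is $F$-invariant since $x \in \bigcap C_{\alpha}$ implies $F(x) \in C_{\alpha}$ for every $\alpha$. Zorn's lemma supplies a minimal element $M_{0} \in \mathcal{F}$.

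Next I would show that this minimal set $M_{0}$ is compact. Set $M_{1} := \overline{\mathrm{co}}(F(M_{0}) \cup \{ x_{0} \})$. Then $M_{1} \subseteq M_{0}$ because $F(M_{0}) \cup \{x_{0}\} \subseteq M_{0}$ and $M_{0}$ is closed and convex; moreover $x_{0} \in M_{1}$ and $F(M_{1}) \subseteq F(M_{0}) \subseteq M_{1}$, so $M_{1} \in \mathcal{F}$. Minimality forces $M_{1} = M_{0}$. Using item (ii) of the proposition, Theorem \ref{Theorem:MNC}, item (vii), and item (v), we obtain
\begin{equation*}
\alpha(M_{0}) = \alpha(M_{1}) = \alpha(\mathrm{co}(F(M_{0}) \cup \{x_{0}\})) = \alpha(F(M_{0}) \cup \{x_{0}\}) = \alpha(F(M_{0}))
\quad.
\end{equation*}
If $\alpha(M_{0}) > 0$, the condensing hypothesis gives $\alpha(F(M_{0})) < \alpha(M_{0})$, a contradiction; hence $\alpha(M_{0}) = 0$, i.e., $M_{0}$ is precompact by item (v). Since $M_{0}$ is closed in the complete space $E$, it is compact. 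Finally, the restriction $F|_{M_{0}} \colon M_{0} \to M_{0}$ is a continuous self-map of the nonempty, compact, convex subset $M_{0}$ of the \lcs\ $E$, and the Theorem of Schauder-Tychonoff \cite[II \S7.1, Theorem 1.13]{MR1987179} supplies a fixed point in $M_{0} \subseteq M$.

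The main obstacle is concentrated in the compactness step for $M_{0}$, and it is precisely what Theorem \ref{Theorem:MNC} resolves: without invariance of $\alpha$ under the convex hull, one cannot reduce $\alpha(M_{1}) = \alpha(\overline{\mathrm{co}}(F(M_{0}) \cup \{x_{0}\}))$ to $\alpha(F(M_{0}))$, and it is exactly this reduction that triggers the contradiction under the condensing hypothesis. The Zorn step and the concluding Schauder-Tychonoff step are then essentially formal.
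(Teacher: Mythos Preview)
Your proposal is correct and follows essentially the same route the paper indicates: adapt the classical Sadovski\u{\i} argument (the paper's $\hat{M}$ is your $M_{0}$, and $\Phi(\hat{M}) = \overline{\mathrm{co}}\,F(\hat{M})$ plays the role of your $M_{1}$), use Theorem~\ref{Theorem:MNC} to pass from $\alpha(\overline{\mathrm{co}}(F(M_{0})\cup\{x_{0}\}))$ to $\alpha(F(M_{0}))$, and finish with Schauder--Tychonoff instead of Schauder. One cosmetic remark: Zorn's lemma is unnecessary, since $\mathcal{F}$ is closed under arbitrary intersections and the minimal element is simply $\bigcap_{C\in\mathcal{F}} C$.
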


As before, slightly modify the proof in \cite[Section 2.3, Theorem 2.3]{MR2059617}, using Theorem \ref{Theorem:MNC} in
\begin{equation*}
\alpha(F(\hat{M})) = \alpha( \overline{\mathrm{co}} F(\hat{M}) ) = \alpha( \Phi(\hat{M})) = \alpha(\hat{M}) 
\quad,
\end{equation*}
and applying the Theorem of Schauder-Tychonoff instead of the Theorem of Schauder \cite[II \S7.1, Theorem 1.13]{MR1987179}.

%Vignoli's separation result also carries over to the setting of Fr\'{e}chet spaces. Define operator $F$ by $F(x) := (1 - d(0, x)) \cdot x$ for $d(0, x) \leq 1$ and $F(x) := 0$ for $d(0, x) > 1$. 
Even more general, Gaji\'{c} \cite[Theorem 2]{10.2307/24894850} proved a fixed-point theorem for \emph{condensing}\index{Condensing} set-valued mappings $F \colon E \to 2^{E}$, i.e., $\alpha(M) > 0$ implies $\alpha(F(M)) < \alpha(M)$ for all bounded sets $M \subseteq E$.

\begin{theorem}[Gaji\'{c}]
Let $(E, d, W)$ be a complete TCS with continuous structure $W$ and satisfying properties (P) and (Q), respectively. Let $F \colon E \to 2^{E}$ be a set-valued, condensing mapping with $W$-convex values, closed graph, and bounded range. Then $F$ has a fixed point, i.e., there exists $x \in E$ such that $x \in F(x)$.
\end{theorem}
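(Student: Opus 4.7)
The plan is to adapt the classical Sadovski\u{\i} minimal-invariant-set argument to the TCS setting, using Theorem~\ref{Theorem:Gajic} in place of the usual $\alpha(\mathrm{co}(M)) = \alpha(M)$, and to conclude by invoking a Kakutani-type theorem on the resulting compact $W$-convex set.

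First, I would fix any $x_{0} \in E$ and introduce $B := \overline{W\textnormal{-}\mathrm{co}}(F(E) \cup \{x_{0}\})$. Iterating the TCS inequality with base point $x_{0}$ shows that the $W$-convex hull of a bounded set is bounded; since $F(E)$ is bounded by assumption, so is $B$. Continuity of $W$ ensures that the closed $W$-convex hull is genuinely closed and $W$-convex, and $F(B) \subseteq F(E) \subseteq B$ makes $B$ invariant under $F$. Now let $\mathcal{F}$ denote the family of all closed, $W$-convex, $F$-invariant subsets of $B$ containing $x_{0}$, and put $M := \bigcap_{C \in \mathcal{F}} C$. Arbitrary intersections of closed $W$-convex sets are closed and $W$-convex, and $F$-invariance is preserved under intersection, so $M \in \mathcal{F}$; in particular $M$ is nonempty, bounded, closed, $W$-convex, and satisfies $F(M) \subseteq M$.

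Second, I would verify the minimality identity
\begin{equation*}
M = \overline{W\textnormal{-}\mathrm{co}}(F(M) \cup \{x_{0}\}).
\end{equation*}
Calling the right-hand side $M'$: since $M$ itself is closed, $W$-convex, and contains $F(M) \cup \{x_{0}\}$, one has $M' \subseteq M$; then $F(M') \subseteq F(M) \subseteq M'$, so $M' \in \mathcal{F}$ and $M \subseteq M'$. Invoking Theorem~\ref{Theorem:Gajic} together with closure-invariance of $\alpha$ and the identity $\alpha(A \cup \{x_{0}\}) = \alpha(A)$ (from $\alpha(\{x_{0}\}) = 0$ and the union-maximum formula) yields
\begin{equation*}
\alpha(M) = \alpha\bigl(\overline{W\textnormal{-}\mathrm{co}}(F(M) \cup \{x_{0}\})\bigr) = \alpha(F(M) \cup \{x_{0}\}) = \alpha(F(M)).
\end{equation*}
If $\alpha(M) > 0$, the condensing property forces $\alpha(F(M)) < \alpha(M)$, a contradiction. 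Hence $\alpha(M) = 0$, and completeness of $E$ together with closedness of $M$ makes $M$ compact.

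Third and last, on the compact $W$-convex set $M$ the restriction $F|_{M}$ has closed graph with values in $M$, hence is upper semicontinuous with compact $W$-convex values, and one invokes a Kakutani-type fixed-point theorem in the convex-metric-space setting to produce $x \in M$ with $x \in F(x)$. The main obstacle is precisely this final invocation: in the absence of a linear structure there is no direct appeal to Kakutani-Fan-Glicksberg, so one must either rely on a set-valued fixed-point result already established for TCS with continuous $W$ and properties (P), (Q), or reduce to the finite-dimensional case by a simplicial approximation built out of $W$, with property (Q)---compactness of $W$-hulls of finite sets---supplying the crucial compactness at the vertex level. Once this step is secured, everything preceding it is routine bookkeeping around the minimality identity and Theorem~\ref{Theorem:Gajic}.
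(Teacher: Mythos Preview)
The paper does not supply its own proof of this statement; it is quoted as Theorem~2 of Gaji\'{c}~\cite{10.2307/24894850} and then used as a black box to derive the subsequent corollary for Fr\'{e}chet spaces. There is therefore nothing in the present paper to compare your argument against.

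On its own merits, your sketch follows the standard Sadovski\u{\i} minimal-invariant-set pattern and is almost certainly the route Gaji\'{c}'s original proof takes: build a smallest closed $W$-convex $F$-invariant set containing a basepoint, use the minimality identity together with Theorem~\ref{Theorem:Gajic}, closure-invariance of $\alpha$, and $\alpha(A\cup\{x_0\})=\alpha(A)$ to force $\alpha(M)=\alpha(F(M))$, conclude $\alpha(M)=0$ by the condensing hypothesis, and hence that $M$ is compact. All of that is sound in the metric/TCS setting. You have also correctly isolated the one genuine gap: the final Kakutani-type step on a compact $W$-convex set, where no linear structure is available. This is precisely the place where continuity of $W$ together with properties~(P) and~(Q) must do real work, and Gaji\'{c}'s paper presumably either proves or cites such a result. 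Without that ingredient your argument remains a sketch with one black box at the end --- which is, in effect, exactly how the present paper treats the entire theorem.
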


From this, using the properties of the obvious TCS $\tilde{W}$ on a Fr\'{e}chet space $E$, we immediately obtain

\begin{corollary}
Let $E$ be a Fr\'{e}chet space. Let $F \colon E \to 2^{E}$ be a set-valued, condensing mapping with convex values, closed graph, and bounded range. Then $F$ has a fixed point.
\end{corollary}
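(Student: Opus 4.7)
The plan is to apply Gaji\'{c}'s fixed-point theorem (the immediately preceding result) to $E$ equipped with the ``obvious'' convex structure $\tilde{W}(x,y,t) := t \cdot x + (1-t)\cdot y$ and a suitable translation-invariant metric. Since $E$ is a Fr\'{e}chet space, Theorem \ref{Theorem:CharFrechet} supplies a translation-invariant metric $d$ inducing its topology that satisfies (\ref{Eq:DStrong}). Completeness of $E$ as a Fr\'{e}chet space transfers to completeness of $(E,d)$ because, in a metrizable \tvs, Cauchy sequences are characterized by the (unique) topological uniformity and are therefore the same for any translation-invariant metric inducing the topology.

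Next I would verify the hypotheses of Gaji\'{c}'s theorem for the triple $(E,d,\tilde{W})$. That $\tilde{W}$ is a TCS is the content of the earlier proposition. Continuity of $\tilde{W}$ is immediate from joint continuity of addition and scalar multiplication in the \tvs\ $E$. Property (P) is verified by the exact same computation used in the proof that $\tilde{W}$-convex sets are stable: applying Lemma \ref{Lemma:AdditiveD} followed by (\ref{Eq:DStrong}) to $d(\tilde{W}(x_1,y_1,t),\tilde{W}(x_2,y_2,t))$ yields the inequality $t\cdot d(x_1,x_2) + (1-t)\cdot d(y_1,y_2)$ (modulo the trivial relabeling of variables in the definition). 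For property (Q), observe that since $\tilde{W}(x,y,t)$ is the genuine convex combination, the $\tilde{W}$-convex hull of a finite set $F$ agrees with the ordinary convex hull $\mathrm{co}(F)$; this is the continuous image of the (compact) standard simplex under $(\lambda_1,\dots,\lambda_n)\mapsto \sum_i\lambda_i z_i$ with $z_i\in F$, hence compact.

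It then remains to translate the assumptions on $F$ into the language required by Gaji\'{c}'s theorem. Since $\tilde{W}$-convexity coincides with convexity in $E$, the convex-valuedness of $F$ is exactly $\tilde{W}$-convex-valuedness. The closed-graph and bounded-range hypotheses are preserved verbatim, and the condensing property is formulated in terms of $\alpha$, which is the same Hausdorff MNC in both statements. Applying Gaji\'{c}'s theorem now yields an $x \in E$ with $x \in F(x)$.

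I do not expect any genuine obstacle: the whole preceding section has been engineered so that the Fr\'{e}chet-space setting fits the convex-metric-space hypotheses of Gaji\'{c}. The only mildly subtle point, worth highlighting explicitly in the final write-up, is the transfer of completeness from the Fr\'{e}chet topology to the particular metric $d$ obtained from Theorem \ref{Theorem:CharFrechet}; everything else reduces to unpacking the ``obvious'' convex structure and invoking properties already established in the paper.
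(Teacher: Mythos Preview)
Your proposal is correct and follows exactly the route the paper intends: apply Gaji\'{c}'s theorem to $(E,d,\tilde{W})$ after equipping $E$ with the metric from Theorem~\ref{Theorem:CharFrechet}. The paper's ``proof'' is the single clause ``using the properties of the obvious TCS $\tilde{W}$ on a Fr\'{e}chet space $E$, we immediately obtain,'' relying on the verification of (P) and (Q) already given just after Theorem~\ref{Theorem:Gajic}; you have simply spelled out those checks (and the completeness transfer) explicitly, which is entirely appropriate.
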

 
We think that this is just the beginning of lifting known results from Banach to Fr\'{e}chet spaces. As an outlook, we give two examples of possible future generalizations: First of all, establishing a Nussbaum-Sadovski\u{\i} degree and its properties \cite[Section 3.5]{MR2059617} for Fr\'{e}chet spaces, secondly, lifting the FMV and Feng spectra \cite[Chapters 6 and 7]{MR2059617} of Nonlinear Spectral Theory. In the latter case, the defining property of an MNC helps in establishing the closed- and boundedness of these spectra, see e.g., the proofs of Lemma 6.2 and Theorem 7.1 in \cite{MR2059617}. 

\section*{Acknowledgements}
First of all, we would like to sincerely thank Prof. Dr. Delio Mugnolo (Fern\-Universit\"{a}t Hagen) for his valuable advice and support. We would also like to encourage the reader to give us feedback. Any help is appreciated very much!

\bibliographystyle{alpha}
\bibliography{Frechet-MNC}

\end{document}